\newtheorem{theorem}{Theorem}[section]
\newtheorem{definition}{Definition}[section]
\newtheorem{lemma}{Lemma}[section]
\newtheorem{proposition}{Proposition}[section]
\newtheorem{remark}{Remark}[section]
\newtheorem{conj}{Conjecture}[section]
\begin{document}

	\title{A Newton-Okounkov Body Viewpoint on the SOS Conjecture}

	\author[Z. Wang]{Zhiwei Wang}
	\address{Zhiwei Wang: Laboratory of Mathematics and Complex Systems (Ministry of Education)\\ School of Mathematical Sciences\\ Beijing Normal University\\ Beijing 100875\\ P. R. China}
	\email{zhiwei@bnu.edu.cn}
	
	\author[C. Yue]{Chenlong Yue}
	\address{Chenlong Yue: School of Mathematical Sciences\\ Beijing Normal University\\ Beijing 100875\\ P. R. China}
	\email{aystl271828@163.com}
	
	\author[X. Zhou]{Xiangyu Zhou}
	\address{Xiangyu Zhou: Institute of Mathematics\\Academy of Mathematics and Systems Science\\and Hua Loo-Keng Key
		Laboratory of Mathematics\\Chinese Academy of
		Sciences\\Beijing\\100190\\P. R. China}
	\address{School of
		Mathematical Sciences, University of Chinese Academy of Sciences,
		Beijing 100049, P. R. China}
	\email{xyzhou@math.ac.cn}
	
	\maketitle % 显示标题、作者、日期
	
	\begin{abstract}
		Let $z\in \mathbb C^n$ be the complex coordinates on $\mathbb C^n$, and $A(z,\bar z)$ be a real-valued Hermitian polynomial. The famous Ebenfelt's SOS conjecture asks for the minimum rank of $A(z,\bar z)\|z\|^2$ under the restriction that $A(z,\bar z)\|z\|^2$ is an SOS. Assume that $A(z,\bar z)$ is   bihomogeneous. In the present note,  we establish a  connection between Ebenfelt's (Weak) SOS Conjecture and the theory of Newton-Okounkov bodies. By reformulating the conjecture in terms of lattice semigroups and their associated Newton-Okounkov convex bodies, we transform the problem of finding the minimal rank of a prolonged sum-of-squares polynomial into an extremal problem in convex geometry.  In particular, we prove that this minimal rank is attained at the extreme points of a specific Newton-Okounkov body. Furthermore, if $A(z,\bar z)$ is moreover diagonal, we demonstrate that  the relevant extreme points are finitely many rational points, thereby reducing the verification of the conjecture to a computationally tractable problem. This work provides a new tool for attacking the SOS Conjecture.
	\end{abstract}
	
	% Table of Contents
	\tableofcontents
	% Main Text Begins
	\section{Introduction}
	Let $z \in \mathbb{C}^n$, $A:=A(z,\bar{z}) \in \mathbb{C}[z_1,\cdots,z_n,\bar{z}_1,\cdots,\bar{z}_n]$, and $\|z\|$ be the usual Euclidean norm. Denote ${\rm{SOS}}_n$ as the set of real polynomials that can be expressed as sums of squares of holomorphic polynomials; that is, $A(z,\bar{z}) \in {\rm{SOS}}_n$ if and only if there exist holomorphic polynomials $f_1,\cdots,f_R$ such that $A(z,\bar{z}) = \sum_{i=1}^R |f_i|^2$. If $f_1,\cdots,f_R$ are linearly independent, then $R$ (the minimal number of such polynomials for which the equality holds) is called the rank of $A(z,\bar{z})$.
	
	For any $A(z,\bar{z}) \in \mathbb{C}[z,\bar{z}]$, choose a sufficiently large integer $d$ and arrange the monomials in $\mathbb{C}^n$ of degree at most $d$ in lexicographical order as
	\[
	\mathfrak{Z} = (1,z_1,\cdots,z_n,z_1^2,\cdots,z_1z_n,\cdots,z_n^d)
	\]
	in a row. Then there exists a Hermitian matrix $H$ such that $A(z,\bar{z}) = \mathfrak{Z}H\mathfrak{Z}^*$, where $\mathfrak{Z}^*$ denotes the conjugate transpose of $\mathfrak{Z}$. In fact, $A(z,\bar{z}) \in {\rm{SOS}}_n$ if and only if $H$ is positive semidefinite. By the diagonalization algorithm for Hermitian matrices, the rank of the polynomial $A$ coincides with the rank of the matrix $H$, which is independent of the choice of $d$. 
	%This technique is well-documented in standard textbooks \cite{D1}. 
	Throughout this paper, when we refer to the eigenvalues and trace of a polynomial in the subsequent text, we essentially mean those of the corresponding matrix defined in the above manner.
	
	\begin{definition}[Prolongation map]
		The prolongation map in $\mathbb{C}[z_1,\cdots,z_n,\bar{z}_1,\cdots,\bar{z}_n]$ is defined as
		\begin{equation}
			J_n : \mathbb{C}[z,\bar{z}] \to \mathbb{C}[z,\bar{z}], \quad A(z,\bar{z}) \mapsto A(z,\bar{z})\|z\|^2.
		\end{equation}
	\end{definition}
	
	The prolongation map is a linear injection. With respect to the aforementioned polynomial basis, the restriction of the prolongation map to a finite-dimensional subspace of $\mathbb{C}[z,\bar{z}]$ can be represented by a matrix, which is explicit and well-defined. Surprisingly, when the positive semidefiniteness constraint is imposed, the rank of the image of the prolongation map cannot take all positive values; instead, there exist regular gaps between certain intervals of ranks. Inspired by the Huang-Ji-Yin Gap Conjecture \cite{HJY}, Ebenfelt proposed the following conjecture \cite{E1}:
	
	\begin{conj}[SOS Conjecture]
		Let $n \geq 2$. If $A(z,\bar{z})\|z\|^2 \in {\rm{SOS}}_n$, then the rank $R$ of $A\|z\|^2$ either satisfies
		\[
		R \geq (\kappa_0+1)n - \frac{(\kappa_0+1)\kappa_0}{2} - 1,
		\]
		where $\kappa_0$ is the largest integer such that $\frac{\kappa(\kappa+1)}{2} < n$, or there exists $\kappa \in \{0,1,2,\cdots,\kappa_0\}$ such that
		\[
		\kappa n - \frac{\kappa(\kappa-1)}{2} \leq R \leq \kappa n.
		\]
	\end{conj}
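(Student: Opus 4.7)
The plan is to translate the SOS problem into a convex-geometric question by assigning, to each bihomogeneous $A(z,\bar z)\|z\|^2\in\mathrm{SOS}_n$, a graded lattice semigroup whose Newton--Okounkov body encodes the admissible ranks. Concretely, I would fix a valuation on $\mathbb{C}[z_1,\dots,z_n]$ (for instance, a term order refining the lexicographic ordering used in defining $\mathfrak{Z}$) and, for any SOS representation $A\|z\|^2=\sum|f_i|^2$, record the valuations of the $f_i$ as points of a semigroup $\Gamma\subset\mathbb{Z}_{\geq 0}^n$. Bihomogeneity of $A$ forces $\Gamma$ to be concentrated in a fixed total degree, so its rescaled convex hull $\Delta(\Gamma)$ is a well-defined bounded polytope.

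Next I would argue that the rank $R$ is bounded below by the number of distinct valuation vectors appearing in any decomposition, and that these vectors must lie in $\Gamma$; a minimal-rank decomposition can then be perturbed so that each $f_i$ has an extreme valuation, which is the content of the claim that the minimum is realized at extreme points of $\Delta(\Gamma)$. The key geometric input is that the prolongation map, being multiplication by $\|z\|^2=\sum z_i\bar z_i$, shifts the support of each $|f_i|^2$ by one of $e_1,\dots,e_n$, so at the level of the Newton--Okounkov body it corresponds to Minkowski sum with the vertex set $\{e_1,\dots,e_n\}$ of the standard simplex.

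In the diagonal case, $A$ is a positive combination of terms $|z^\alpha|^2$, so $\Gamma$ is generated by finitely many lattice points and $\Delta(\Gamma)$ is a rational polytope with finitely many vertices. I would enumerate these vertices and count how many persist after the Minkowski shift above, thereby converting rank minimization into a finite combinatorial problem. The gap values $\kappa n-\binom{\kappa}{2}$ and $(\kappa_0+1)n-\binom{\kappa_0+1}{2}-1$ should arise as counts of lattice points on codimension-$\kappa$ faces of a standard simplex, with $\kappa_0$ the largest $\kappa$ such that $\binom{\kappa+1}{2}<n$ marking the threshold beyond which no intermediate face exists.

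The main obstacle, as I anticipate it, is the passage from the diagonal case to the general bihomogeneous case: off-diagonal cross terms in $A$ can produce cancellation among the $f_i$ that changes the rank in ways invisible to the Newton--Okounkov body of a single valuation. One would either need to show that a minimal-rank decomposition can be realized by an essentially diagonal expression after a unitary change of coordinates, or to intersect Newton--Okounkov bodies over a family of valuations and prove that the extremal structure is preserved. Matching the extreme-point count to Ebenfelt's prescribed gap structure \emph{exactly}---and not merely proving one-sided bounds---is where I expect the bulk of the technical work to lie.
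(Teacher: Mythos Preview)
First, the statement you are trying to prove is a \emph{conjecture}: the paper does not prove it. What the paper actually proves are Theorems~\ref{thm:main-general} and~\ref{thm:main-diagonal}, which reduce the computation of the minimal rank $R_{n,d}$ (in the bihomogeneous case) to a search over the extreme points of a certain Newton--Okounkov body, without ever identifying those ranks with Ebenfelt's gap values. So there is no ``paper's own proof'' of this statement to compare against; the conjecture remains open even in the diagonal bihomogeneous case, where the paper offers only numerical evidence.

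Your Newton--Okounkov construction is also genuinely different from the paper's. You record the valuations $v(f_i)\in\mathbb{Z}_{\geq 0}^n$ of the summands in a chosen SOS decomposition and take their convex hull in $\mathbb{R}^n$. The paper works instead in the ambient space $\mathrm{Herm}_{\binom{n+d-1}{d}}(\mathbb{C})$: the semigroups $S_1,S_2$ consist of Gaussian-integer Hermitian matrices $H$ with $H\geq 0$, respectively $J_n(H)\geq 0$; the half-space is $M=\mathrm{Tr}^{-1}(\mathbb{R}_{\geq 0})$; and $\Delta_1\subset\Delta_2$ are the trace-one slices of the corresponding cones. The infimum of $H\mapsto\mathrm{Rank}(J_n(H))$ over $\Delta_2\setminus\Delta_1$ is then pushed to $E(\Delta_2)\setminus E(\Delta_1)$ using only that rank is monotone under sums of positive semidefinite matrices. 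No valuation, no choice of decomposition, and no individual $f_i$ enter.

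Your sketch has a concrete gap at the step ``the rank $R$ is bounded below by the number of distinct valuation vectors appearing in any decomposition'': this is trivially true but useless as a lower bound, since the number of distinct valuations can be much smaller than $R$, and in any case the set $\Gamma$ depends on the decomposition rather than on $A$. The Minkowski-sum picture of prolongation is also imprecise: multiplication by $\|z\|^2$ shifts the support of $A$, not the valuations of the $f_i$, and when $A\notin\mathrm{SOS}_n$ there is no SOS data for $A$ to shift. Finally, the suggestion that the gap values $\kappa n-\binom{\kappa}{2}$ arise as lattice-point counts on faces of a simplex is pure speculation; nothing in the paper supports such an identification, and you give no argument for it.
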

	
	A brilliant lemma by Huang \cite{H2} shows that $R=0$ or $R \geq n$, so the conjecture holds for $n=2$ by Huang's lemma. For $n \geq 3$, partial related results have been obtained \cite{B1}\cite{G1}\cite{WYZ}. In particular, when $A \in {\rm{SOS}}_n$, Dusty and Halfpap had already studied and proved this part from an algebraic perspective long before the conjecture was proposed \cite{G3}; thus, the remaining case of SOS conjecture is the following:
	\begin{conj}[(Weak) SOS Conjecture]\label{conj: weak sos} 
		Let $n \geq 2$. If $A(z,\bar{z}) \notin {\rm{SOS}}_n$ but $A(z,\bar{z})\|z\|^2 \in {\rm{SOS}}_n$, then the rank of $A(z,\bar{z})\|z\|^2$ satisfies
		\begin{align}
			{\rm{Rank}} \Big( A(z,\bar{z})\|z\|^2 \Big) \geq n(\kappa_0+1) - \frac{(\kappa_0+1)\kappa_0}{2} - 1,	\label{weak lower bound}
		\end{align}
		where $\kappa_0$ is the largest integer such that $\kappa(\kappa+1)/2 < n$.
	\end{conj}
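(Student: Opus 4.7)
The plan is to follow the route sketched in the abstract: translate the rank of $A(z,\bar z)\|z\|^2$ into a counting problem on a lattice semigroup, pass to the associated Newton-Okounkov convex body $\Delta$, and then extract the lower bound from the combinatorics of the extreme points of $\Delta$. First I would fix an admissible (say lexicographic) monomial valuation $v$ on $\mathbb{C}[z_1,\dots,z_n]$, so that any SOS decomposition $A\|z\|^2 = \sum_{i=1}^R |f_i|^2$ with linearly independent $f_i$ produces $R$ distinct leading exponents $v(f_1),\dots,v(f_R)$. Under the bihomogeneity of $A$, the set $S=\{v(f_i)\}$ sits inside a finitely generated graded semigroup, and $R=\#S$.

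Next I would build the Newton-Okounkov body $\Delta_A := \overline{\operatorname{conv}}\bigl(\bigcup_d \tfrac{1}{d}S_d\bigr)$ and prove the structural identity announced in the abstract: the minimal rank $R$ is realized by $S$ supported at the extreme points of $\Delta_A$. The prolongation factor $\|z\|^2$ translates on the semigroup side into the requirement that every graded piece of $S$ lies in a Minkowski sum $S' + \{e_1,\dots,e_n\}$ for a smaller semigroup $S'$; this is what encodes $A\|z\|^2 \in \mathrm{SOS}_n$. Meanwhile, the hypothesis $A \notin \mathrm{SOS}_n$ means that no $S'$ with the same Newton-Okounkov body can itself be realized without the prolongation, i.e.\ $\Delta_A$ cannot be obtained by translating a smaller convex body inside the interior of the positive orthant. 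This geometric obstruction is the crux.

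The counting step is then an extremal problem in convex geometry: among convex bodies $\Delta \subset \mathbb{R}^n_{\geq 0}$ that (a) arise as $\Delta_A$ for some bihomogeneous $A$, (b) satisfy the prolongation condition, and (c) do not lie in the ``purely holomorphic'' locus, one must show that the number of extreme points is at least $n(\kappa_0+1)-\binom{\kappa_0+1}{2}-1$. The shape of this bound, a triangular-number correction to $n(\kappa_0+1)$, strongly suggests that the extremal configuration is the set of lattice points on the boundary of a simplex of height $\kappa_0+1$ inside the standard corner, with one missing vertex reflecting the strict inequality $\kappa_0(\kappa_0+1)/2 < n$. I would try to establish this via induction on $\kappa_0$: the base case $\kappa_0=0$ (equivalently $n=2$) is Huang's lemma, and the inductive step would peel off the lowest-degree face of $\Delta_A$ and apply the hypothesis to a residual body of one smaller height.

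The main obstacle, as with all prior work on this conjecture, will be step (c): translating the algebraic non-SOS hypothesis on $A$ into a sharp geometric constraint on $\Delta_A$. In the diagonal case the abstract promises that the extreme points of $\Delta_A$ form a finite set of rational points, so the verification should reduce to a linear-programming check; this makes the diagonal case the natural testing ground. The general bihomogeneous case will require either a deformation/perturbation argument reducing to the diagonal case, or a direct analysis of how off-diagonal Hermitian entries in $H$ interact with the valuation $v$ — and it is the failure of valuation semigroups to behave well under such off-diagonal perturbations that I expect to be the most delicate technical point.
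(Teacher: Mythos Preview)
The statement you are trying to prove is labelled a \emph{Conjecture} in the paper, and the paper does not prove it. The paper's actual results are Theorems~\ref{thm:main-general} and~\ref{thm:main-diagonal}, which do not establish the lower bound~\eqref{weak lower bound}; they only show that the infimum $R_{n,d}$ in~\eqref{infimum rank} is attained on the set $E(\Delta_2)\setminus E(\Delta_1)$ of extreme points, and in the diagonal case that this set is finite and rational. The paper then presents numerical evidence (the table in \S\ref{sect: diagonal}) but explicitly leaves the conjecture open. So there is no ``paper's own proof'' to compare against: you are attempting something the authors do not claim to do.

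Beyond that mismatch, your construction of the Newton--Okounkov body is not the one the paper uses. You build $\Delta_A$ from a monomial valuation applied to the summands $f_i$ of an SOS decomposition; the paper instead takes the lattice semigroup to be the set of \emph{Hermitian matrices} $H\in\operatorname{Herm}_{\binom{n+d-1}{d}}(\mathbb{Z}[i])$ with $J_n(H)\geq 0$, and the half-space is cut out by the trace. The resulting bodies $\Delta_1,\Delta_2$ live in the space of Hermitian matrices, not in the exponent lattice $\mathbb{Z}^n$, and the key reduction to extreme points comes from the elementary inequality $\operatorname{Rank}(H_1+H_2)\geq\max\{\operatorname{Rank}(H_1),\operatorname{Rank}(H_2)\}$ for positive semidefinite matrices together with Lemma~\ref{extreme point outside}, not from any valuation-theoretic argument. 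Your Step~3 (the induction on $\kappa_0$ peeling off faces) and the translation of ``$A\notin\mathrm{SOS}_n$'' into a geometric obstruction on $\Delta_A$ are precisely the missing ingredients that keep the conjecture open; you correctly identify this as the main obstacle, but a proof sketch that names its own gap is not a proof.
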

	
	%A brilliant lemma by Huang \cite{H2} shows that $R=0$ or $R \geq n$, so the conjecture holds for $n=2$ by Huang's lemma. For $n \geq 3$, partial related results have been obtained \cite{B1}\cite{G1}\cite{WYZ}.
	
	As pointed out by Ebenfelt, one of the main difficulties in  Conjecture \ref{conj: weak sos} comes from the fact that it seems hard to characterize when $A(z,\bar z)\|z\|^2$ is in fact an SOS.
	
	%The main  difficulty lies in determining  whether a real polynomial $A$ with negative eigenvalues itself can become positive semidefinite after prolongation, i.e., whether the matrix corresponding to $A\|z\|^2$ is positive semidefinite. This is because the two matrices (before and after prolongation) not only differ drastically in size, but their eigenvalues are not simply linearly superposed; moreover, the relationship between the eigenvalues becomes increasingly intractable as $d$ and $n$ increase. 
	This paper focuses on the case where $A$ is bihomogeneous.
	
	Let $\mathbb{C}_{(d,d)}[z,\bar{z}]$ denote the linear space consisting of $(d,d)$-bihomogeneous real polynomials in $\mathbb{C}[z_1,\cdots,z_n,\bar{z}_1,\cdots,\bar{z}_n]$. Via the aforementioned basis representation, it is isomorphic to the real vector space of all $\binom{n+d-1}{d}$-dimensional Hermitian matrices:
	\[
	{\rm{Herm}}\left( \binom{n+d-1}{d}, \mathbb{C} \right) \cong \mathbb{R}^{\binom{n+d-1}{d}^2}.
	\]
	%Noting the dimension of the space, the above isomorphism is canonical.
	
	The (Weak) SOS Conjecture in the homogeneous case amounts to conjecturing that the number $R_{n,d}$, defined as
	\begin{align}
		\label{infimum rank}
		R_{n,d} := \inf\left\{ {\rm{Rank}}\Big(A(z,\bar{z})\|z\|^2 \Big) \,\bigg|\, A \in \mathbb{C}_{(d,d)}[z,\bar{z}],\ A \notin {\rm{SOS}}_n,\ J_n(A) \in {\rm{SOS}}_n \right\},
	\end{align}
	is greater than the integer on the right-hand side of equation (\ref{weak lower bound}), which depends only on $n$.
	
	Restricting the prolongation map $J_n$ to $\mathbb{C}_{(d,d)}[z,\bar{z}]$, the following theorem shows that the infimum is attained within the set of extreme points of a compact convex set.
	
	\begin{theorem}\label{thm:main-general}
		Let $n, d \geq 2$ be integers. Consider the following lattice semigroups of Hermitian matrices:
		\begin{itemize}
			\item $S_1 := \left\{ H \in \operatorname{Herm}_{\binom{n+d-1}{d}}(\mathbb{Z}[i]) \mid H \geq 0 \right\}$, corresponding to sum-of-squares polynomials.
			\item $S_2 := \left\{ H \in \operatorname{Herm}_{\binom{n+d-1}{d}}(\mathbb{Z}[i]) \mid J_n(H) \geq 0 \right\}$, corresponding to polynomials whose prolongation is a sum of squares.
		\end{itemize}
		Let $\Delta_1, \Delta_2$ be the Newton-Okounkov bodies associated with the strongly admissible pairs $(S_1, M)$ and $(S_2, M)$, respectively, where $M = \operatorname{Tr}^{-1}(\mathbb{R}_{\geq 0})$. Denote by $E(\Delta_i)$ the set of extreme points of $\Delta_i$.
		
		Then, the minimal rank \eqref{infimum rank} is attained within the set of extreme points of $\Delta_2$ that lie outside the extreme points of $\Delta_1$, i.e.,
		\[
		R_{n,d} =  \inf_{X \in E(\Delta_2) \setminus E(\Delta_1)} {\rm{Rank}}(J_n(X)).
		\]
	\end{theorem}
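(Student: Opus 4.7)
The plan is to recast the rank-minimization defining $R_{n,d}$ as an extremal problem over the compact convex body $\Delta_2$, and then to push the optimizer to an extreme point lying outside $\Delta_1$ via a face argument.

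First, under the isomorphism $\mathbb{C}_{(d,d)}[z,\bar z]\cong\operatorname{Herm}_{\binom{n+d-1}{d}}(\mathbb{C})$, the conditions $A\in\operatorname{SOS}_n$ and $J_n(A)\in\operatorname{SOS}_n$ correspond to $H\geq 0$ and $J_n(H)\geq 0$, respectively, with rank preserved. A direct expansion of $A(z,\bar z)\|z\|^2$ in the monomial basis yields the trace identity $\operatorname{Tr}(J_n(H))=n\operatorname{Tr}(H)$, so $\operatorname{Tr}$ is strictly positive on $S_2\setminus\{0\}$ (and a fortiori on $S_1\setminus\{0\}$), giving strong admissibility of both pairs $(S_i,M)$. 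Together with the density of $\mathbb{Z}[i]$-points in the rational spectrahedral cone, the standard Newton--Okounkov closure formula identifies
\[
\Delta_1=\{H\in\operatorname{Herm}:H\geq 0,\ \operatorname{Tr}(H)=1\},\qquad \Delta_2=\{H\in\operatorname{Herm}:J_n(H)\geq 0,\ \operatorname{Tr}(H)=1\}.
\]
Since rank and the defining constraints are positively homogeneous, rescaling any competitor by its trace gives $R_{n,d}=\inf\{\operatorname{Rank}(J_n(H)):H\in\Delta_2\setminus\Delta_1\}$.

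Next, compactness of $\Delta_2$ and integrality of rank yield a minimizer $H^\star\in\Delta_2\setminus\Delta_1$. Set $V:=\operatorname{Range}(J_n(H^\star))$ and $F_V:=\{H\in\Delta_2:\operatorname{Range}(J_n(H))\subset V\}$. Then $F_V$ is a closed face of $\Delta_2$: if $H=\tfrac12(H_1+H_2)\in F_V$ with $H_i\in\Delta_2$, then positive-semidefiniteness of $J_n(H_1)$ and $J_n(H_2)$ forces $\operatorname{Range}(J_n(H_i))\subset\operatorname{Range}(J_n(H_1)+J_n(H_2))\subset V$, so $H_i\in F_V$. In particular $E(F_V)\subset E(\Delta_2)$, and every $\widetilde H\in E(F_V)$ satisfies $\operatorname{Rank}(J_n(\widetilde H))\leq\dim V=R_{n,d}$. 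Since $H^\star\in F_V\setminus\Delta_1$ and $\Delta_1$ is convex, Krein--Milman forbids $E(F_V)\subset\Delta_1$, so I may pick $\widetilde H\in E(F_V)\setminus\Delta_1$. A short observation shows $E(\Delta_2)\cap\Delta_1\subset E(\Delta_1)$: if $X\in E(\Delta_2)\cap\Delta_1$ failed to be extreme in $\Delta_1$, a nontrivial decomposition inside $\Delta_1\subset\Delta_2$ would contradict extremeness in $\Delta_2$. Consequently $E(\Delta_2)\setminus E(\Delta_1)=E(\Delta_2)\setminus\Delta_1$, so $\widetilde H\in E(\Delta_2)\setminus E(\Delta_1)$ with $\operatorname{Rank}(J_n(\widetilde H))\leq R_{n,d}$. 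The reverse inequality is immediate: each $X\in E(\Delta_2)\setminus E(\Delta_1)=E(\Delta_2)\setminus\Delta_1$ defines, up to rescaling, a polynomial $A\notin\operatorname{SOS}_n$ with $J_n(A)\in\operatorname{SOS}_n$, which is a valid competitor in \eqref{infimum rank}.

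The principal technical hurdle is the identification in the first step, namely matching the abstract Newton--Okounkov body of a lattice semigroup of Hermitian matrices with the concrete trace-normalized real spectrahedron. The strong admissibility check is painless once the trace identity $\operatorname{Tr}(J_n(H))=n\operatorname{Tr}(H)$ is established, but combining the general closure formula with $\mathbb{Z}[i]$-density to obtain the clean description of $\Delta_i$ takes some care. The face argument, while conceptually the heart of the proof, follows a familiar template from convex optimization over spectrahedra and presents no essential difficulty once the setup is in place.
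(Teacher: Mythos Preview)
Your proposal is correct and follows essentially the same three-step architecture as the paper: identify $\Delta_i$ explicitly via the trace identity $\operatorname{Tr}(J_n(H))=n\operatorname{Tr}(H)$, normalize by trace to reduce to $\Delta_2\setminus\Delta_1$, then push to extreme points using the rank inequality for sums of positive semidefinite matrices.

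The only notable difference is in the execution of the third step. The paper simply records the inequality $\operatorname{Rank}(J_n(\lambda H_1+(1-\lambda)H_2))\geq\max\{\operatorname{Rank}(J_n(H_i))\}$ and passes directly to $\inf_{E(\Delta_2\setminus\Delta_1)}$, then invokes a preparatory lemma ($E(X_2\setminus X_1)=E(X_2)\setminus E(X_1)$ for $X_1$ closed convex) to rewrite this as $\inf_{E(\Delta_2)\setminus E(\Delta_1)}$. Your face argument via $F_V=\{H\in\Delta_2:\operatorname{Range}(J_n(H))\subset V\}$ and Krein--Milman makes this passage fully explicit and arguably more transparent: it shows concretely why a minimizer can be pushed to an extreme point of $\Delta_2$ outside $\Delta_1$, rather than leaving the descent implicit. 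Your direct verification that $E(\Delta_2)\setminus E(\Delta_1)=E(\Delta_2)\setminus\Delta_1$ replaces the paper's lemma and is in fact one half of its proof. So the two routes are equivalent in substance; yours trades a general set-theoretic lemma for a standard spectrahedral face argument, which is a matter of taste.
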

	Further restricting $J_n$ to the subspace $\mathbb{C}_{(d,d)}^{Diag}[z,\bar{z}]$ consisting of diagonal polynomials in $\mathbb{C}_{(d,d)}[z,\bar{z}]$, we have
	\begin{theorem}[Diagonal Case]\label{thm:main-diagonal}
		Let $n, d \geq 2$ be integers. Define the minimal rank in the diagonal case as
		\[
		R_{n,d}^{\mathrm{Diag}} := \inf \left\{ \operatorname{Rank}\left(A(z,\bar{z})\|z\|^2\right) \ \middle| \ A \in \mathbb{C}_{(d,d)}^{\mathrm{Diag}}[z,\bar{z}],\ A \notin \mathrm{SOS}_n,\ A\|z\|^2 \in \mathrm{SOS}_n \right\}.
		\]
		Consider the canonical identification $\mathbb{C}_{(d,d)}^{\mathrm{Diag}}[z,\bar{z}] \cong \mathbb{R}^{\binom{n+d-1}{d}}$ and the prolongation matrix $J_{n,d}$ from \eqref{equ: mac repr Jnd}. Define the lattice semigroups:
		\begin{itemize}
			\item $S_1^{\mathrm{Diag}} := \left\{ H \in \mathbb{Z}^{\binom{n+d-1}{d}} \mid H \geq 0 \right\}$, corresponding to diagonal sum-of-squares polynomials.
			\item $S_2^{\mathrm{Diag}} := \left\{ H \in \mathbb{Z}^{\binom{n+d-1}{d}} \mid J_{n,d} H \geq 0 \right\}$, corresponding to diagonal polynomials whose prolongation is a sum of squares.
		\end{itemize}
		Let $\Delta_1^{\mathrm{Diag}}, \Delta_2^{\mathrm{Diag}}$ be the Newton-Okounkov bodies associated with $(S_1^{\mathrm{Diag}}, M^{Diag})$ and $(S_2^{\mathrm{Diag}}, M^{Diag})$, where $M^{Diag} = {\rm{Tr}}^{-1}(\mathbb{R}_{\geq 0}) \cap \mathbb{C}_{(d,d)}^{Diag}[z,\bar{z}] $.
		
		Then, the minimal rank $R_{n,d}^{\mathrm{Diag}}$ is attained at a finite set of rational points:
		\[
		R_{n,d}^{\mathrm{Diag}} =\min_{X \in E(\Delta_2^{Diag}) \setminus E(\Delta_1^{Diag})} {\rm{Rank}}(J_n(X)).
		\]
	\end{theorem}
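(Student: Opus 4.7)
The strategy is to restrict the proof of Theorem~\ref{thm:main-general} to the diagonal subspace and then observe that in this setting the Newton-Okounkov bodies become rational polytopes, forcing finiteness and rationality of the extreme points. A bihomogeneous diagonal polynomial has the form $A(z,\bar z)=\sum_{|\alpha|=d} a_\alpha\,|z^\alpha|^2$, and a short calculation gives
\[
A(z,\bar z)\|z\|^2 \;=\; \sum_{|\beta|=d+1}\Bigl(\sum_{\alpha + e_j = \beta} a_\alpha\Bigr)|z^\beta|^2,
\]
which is again diagonal. Hence the prolongation $J_n$ restricts to a map between diagonal subspaces, and $A \in \mathrm{SOS}_n$, resp. $J_n(A) \in \mathrm{SOS}_n$, amounts to the componentwise linear inequality $a \geq 0$, resp. $J_{n,d}\,a \geq 0$, where $J_{n,d}$ is the integer matrix recorded in \eqref{equ: mac repr Jnd}. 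I would then rerun the proof of Theorem~\ref{thm:main-general} on the pair $(S_i^{\mathrm{Diag}}, M^{\mathrm{Diag}})$, checking at each step that the strongly-admissible-pair formalism survives the restriction, so as to deduce $R_{n,d}^{\mathrm{Diag}} = \inf_{X \in E(\Delta_2^{\mathrm{Diag}}) \setminus E(\Delta_1^{\mathrm{Diag}})} \mathrm{Rank}(J_n(X))$.

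Next, I would identify the semigroups $S_1^{\mathrm{Diag}}, S_2^{\mathrm{Diag}}$ as the integer points of the rational polyhedral cones
\[
C_1^{\mathrm{Diag}}=\mathbb{R}_{\geq 0}^{\binom{n+d-1}{d}}, \qquad C_2^{\mathrm{Diag}}=\bigl\{x\in\mathbb{R}^{\binom{n+d-1}{d}} : J_{n,d}\,x\geq 0\bigr\}.
\]
The trace is a rational linear functional that is strictly positive on each $C_i^{\mathrm{Diag}}\setminus\{0\}$ (for $C_1^{\mathrm{Diag}}$ this is immediate, for $C_2^{\mathrm{Diag}}$ it requires a short inspection of $J_{n,d}$), so intersecting with $M^{\mathrm{Diag}}$ and normalising produces bounded rational polytopes $\Delta_i^{\mathrm{Diag}}$. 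By the Minkowski--Weyl theorem these polytopes have only finitely many extreme points, each of which is a rational vector in $\mathbb{Q}^{\binom{n+d-1}{d}}$. Consequently $E(\Delta_2^{\mathrm{Diag}})\setminus E(\Delta_1^{\mathrm{Diag}})$ is a finite set of rational points, so the infimum produced in the previous paragraph is attained and may be written as $\min$, which is the claimed identity.

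The main obstacle is the first stage: one must verify that the restriction of the Newton-Okounkov construction from the full Hermitian setting to the diagonal subspace preserves strong admissibility, and that the extreme-point characterisation of Theorem~\ref{thm:main-general} does not lose any minimal-rank witness under this restriction. By contrast, the second stage is essentially the elementary fact that rational polyhedral cones sliced by rational hyperplanes are rational polytopes, together with the classical finiteness of vertices of such polytopes.
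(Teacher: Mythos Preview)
Your proposal is correct and essentially mirrors the paper's proof: both rerun the three-step argument of Theorem~\ref{thm:main-general} on the diagonal pairs $(S_i^{\mathrm{Diag}},M^{\mathrm{Diag}})$ and then observe that $\Delta_2^{\mathrm{Diag}}$, being cut out by the rational linear system $\mathrm{Tr}(H)=1$, $J_{n,d}H\geq 0$, is a rational polytope with finitely many rational vertices (the paper writes out the $2+\binom{n+d}{d+1}$ inequalities and invokes a tight-constraint characterisation plus Cramer's rule where you cite Minkowski--Weyl, but these are equivalent). Your concern about the first stage is misplaced: strong admissibility in the diagonal case follows from Lemma~\ref{trace n lemma} and injectivity of $J_{n,d}$ exactly as in \S\ref{sec: general case}, and since $R_{n,d}^{\mathrm{Diag}}$ is by definition an infimum over diagonal polynomials only, there is no ``loss of minimal-rank witness'' issue to verify.
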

\begin{remark}
The results presented in this paper are restricted to the bihomogeneous setting. Observe that 
the quantity \(R_{n,d}^{\mathrm{Diag}}\)  is non-increasing in the degree \(d\).
Consequently, if the limit 
\[
\lim_{d\to\infty} R_{n,d}^{\mathrm{Diag}}
\]
exceeds the lower bound appearing in the (Weak) SOS Conjecture (2), a standard degree-by-degree argument would imply the validity of the conjecture for all diagonal polynomials (not neccessarily bihomogeneous).
However, for non-diagonal  polynomials, a suitable graded structure that relates the minimal rank for different degrees is not yet available.
Hence the method employed in the diagonal case does not extend directly, and the non-diagonal case remains open.
\end{remark}

	The above theorems significantly narrow down the range of polynomials for which the rank may attain its minimum value; the latter theorem further restricts this range to a finite set, making computer-aided proofs feasible. The proofs of the theorems only rely on a small amount of convex analysis and properties of the prolongation map, and the linearity of the prolongation map still has substantial potential for further exploration. For instance, if we take the norm to be indefinite: $\|z\|^2 = \sum_{i=1}^r |z_i|^2 - \sum_{j=1}^t |z_{r+j}|^2$ (where $r + t \leq n$ and $t < r$), the analogous theorems remain valid. 
%Additionally, we can replace the prolongation map with any trace-preserving linear injection.

	\subsection*{Acknowledgements}
	The first author would like to thank Professor Xiaojun Huang, Wanke Yin,  Sui-Chung Ng and Yun Gao for their interest in our work and their invaluable discussions on the SOS conjecture. This research is supported by the National Key R \& D Program of China (Grant No. 2021YFA1002600 and No. 2021YFA1003100). Z. Wang and X. Zhou are partially supported by grants from the National Natural Science Foundation of China (NSFC) (No. 12571085) and (No. 12288201) respectively. Z. Wang is also supported by the Fundamental Research Funds for the Central Universities.

	\section{Newton-Okounkov body and extreme point}
	
	We first recall basic definitions from convex analysis \cite{R1}.
	
	\begin{definition}A \textbf{convex cone} is  a non-empty convex set which is closed under non-negative scalar multiplication, i.e. $\lambda x\in K$ when $x\in K$ and $\lambda\geq 0$. 
	\end{definition}
	\begin{definition}
		The \textbf{ridge} of a closed convex cone is the maximal subspace contained within the cone. A closed convex cone is said to be \textbf{strictly convex} if its ridge is exactly the origin.
	\end{definition}
	
	\begin{definition}
		Let $C\subset \mathbb R^N$ be a convex set. A point $x\in C$ is said to be \textbf{relative interior point}, if there is a ball centered at $x$ such that its intersection with the affine hull of $C$ is entirely contained in $C$. The set of all relative interior point of $C$ is called the relative interior of $C$.
	\end{definition}
	An open interval in $\mathbb{R}^N$ is defined as the strict convex combination of two distinct points:
	\[
	(a, b) := \{\lambda a + (1-\lambda)b \mid a,b \in \mathbb R^N, a \neq b,\ 0 < \lambda < 1\}.
	\]
	
	\begin{definition}
		For any set $X \subset \mathbb{R}^N$, a point $p\in X$ is called an \textbf{extreme point} of $X$ if there is no open interval $(a,b) \subset X$ containing $p$. Denote by $E(X)$  the set of all extreme points of $X$.
	\end{definition}
	
	%When $X$ is a convex set, this definition coincides with the standard one. Let $E(X)$ denote the set of all extreme points of $X$.
	
	The following lemma shows that removing a closed convex set from any set does not generate new extreme points:
	
	\begin{lemma}
		\label{extreme point outside}
		Let $X_1 \subset X_2$, where $X_1$ is a closed convex set in $\mathbb{R}^N$. Then
		\[
		E(X_2 \setminus X_1) = E(X_2) \setminus E(X_1).
		\]
	\end{lemma}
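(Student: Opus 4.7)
The plan is to establish the two set inclusions separately, exploiting the closedness of $X_1$ for one direction and the trivial observation that any open segment contained in $X_1$ is \emph{a fortiori} an open segment contained in $X_2$ for the other; no deep convex analysis will be required.

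For the inclusion $E(X_2\setminus X_1)\subseteq E(X_2)\setminus E(X_1)$, I would take $p\in E(X_2\setminus X_1)$. Since $E(X_2\setminus X_1)\subseteq X_2\setminus X_1$, one has $p\in X_2$ and $p\notin X_1$, so trivially $p\notin E(X_1)$ (extreme points belong to the set). To see that $p\in E(X_2)$, I argue by contradiction: if some open interval $(a,b)\subseteq X_2$ contained $p$, then the closedness of $X_1$ together with $p\notin X_1$ furnishes an open ball around $p$ disjoint from $X_1$. Choosing $a',b'$ on the segment $(a,b)$ sufficiently close to $p$ on either side, the resulting open interval $(a',b')$ still contains $p$, lies inside $(a,b)\subseteq X_2$, and is disjoint from $X_1$, hence sits inside $X_2\setminus X_1$, contradicting $p\in E(X_2\setminus X_1)$.

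For the reverse inclusion $E(X_2)\setminus E(X_1)\subseteq E(X_2\setminus X_1)$, take $p\in E(X_2)\setminus E(X_1)$. I first rule out $p\in X_1$: were $p$ both in $X_1$ and in $E(X_2)$, then any open interval in $X_1$ through $p$ would also be an open interval in $X_2$ through $p$, so extremity in $X_2$ would force extremity in $X_1$, contradicting $p\notin E(X_1)$. Hence $p\in X_2\setminus X_1$. If $p$ were not extreme in $X_2\setminus X_1$, there would exist an open interval $(a,b)\subseteq X_2\setminus X_1\subseteq X_2$ through $p$, again contradicting $p\in E(X_2)$.

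I anticipate no serious obstacle. The only non-combinatorial step is the small-ball argument in the forward inclusion, where the closedness of $X_1$ is essential; the convexity of $X_1$ is not actually needed for the lemma as stated, although it is natural in the Newton-Okounkov body context in which the lemma will be applied.
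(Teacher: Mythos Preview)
Your proof is correct and follows essentially the same two-inclusion argument as the paper's. Your treatment is in fact slightly more careful: you explicitly justify why $p\in E(X_2)\setminus E(X_1)$ forces $p\notin X_1$ (the paper simply asserts this), and you correctly observe that only closedness of $X_1$ is used, not convexity.
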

	
	\begin{proof}
		We prove the two inclusions separetely.
		\begin{itemize}
			\item $E(X_2) \setminus E(X_1) \subset E(X_2 \setminus X_1) $:
			let $p \in E(X_2 ) \setminus E(X_1)$. Then $p \in X_2, p \notin X_1$, so $p \in X_2 \setminus X_1$. If $p$ were not extreme in $X_2\setminus X_1,$ there would exist distinct $a,b \in X_2\setminus X_1$ and $\lambda \in (0,1)$ with $p=\lambda a + (1-\lambda)b.$ Since $a,b \in X_2$, this contradicts $p \in E(X_2)$.
			\item $E(X_2 \setminus X_1) \subset E(X_2) \setminus E(X_1)$: let  $p \in E(X_2 \setminus X_1)$. Then $p \notin X_1$. Suppose  $p \notin E(X_2)$,  there is an open interval $(a, b) \subset X_2$ containing $p$. Since $X_1$ is closed and convex, $(a, b) \setminus X_1 \subset X_2 \setminus X_1$ is also an open set which contains an interval containing $p$, this contradicts $p \in E(X_2 \setminus X_1)$ . Therefore, if $p \in E(X_2 \setminus X_1)$, then $p \in E(X_2)$, $p \in E(X_2) \setminus E(X_1)$ .
		\end{itemize}
	\end{proof}
	
	Recall that a set $X \subset \mathbb{R}^N$ is said to be \textbf{regularly closed} if $X = \overline{\text{int}(X)}$ (the closure of its interior).

	\begin{lemma}
		\label{rational cap}
		Let $X_1 \subset X_2$ be closed sets in $\mathbb{R}^N$, with $X_2$ regularly closed. If
		\[
		(X_2 \setminus X_1) \cap \mathbb{Q}^N = \emptyset,
		\]
		then $X_1 = X_2$.
	\end{lemma}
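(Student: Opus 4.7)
The plan is to argue by contradiction: assume $X_1 \subsetneq X_2$ and produce a rational point of $X_2 \setminus X_1$, contradicting the hypothesis. The overall strategy uses three ingredients in succession: closedness of $X_1$, regular closedness of $X_2$, and density of $\mathbb{Q}^N$ in $\mathbb{R}^N$.

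First, pick any $x \in X_2 \setminus X_1$. Since $X_1$ is closed, its complement is open, so there exists $\varepsilon > 0$ with $B(x,\varepsilon) \cap X_1 = \emptyset$. At this stage I have an open ball avoiding $X_1$, but this ball need not lie in $X_2$, so I cannot yet extract a point of $X_2 \setminus X_1$ other than $x$ itself. This is where the regular closedness of $X_2$ is essential.

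Second, I use $X_2 = \overline{\operatorname{int}(X_2)}$ to conclude that $\operatorname{int}(X_2)$ is dense in $X_2$. Hence I can pick $y \in \operatorname{int}(X_2) \cap B(x,\varepsilon/2)$, and then a radius $0 < r < \varepsilon/2$ small enough that $B(y,r) \subset \operatorname{int}(X_2) \subset X_2$. By construction $B(y,r) \subset B(x,\varepsilon)$, so $B(y,r) \cap X_1 = \emptyset$, i.e., the nonempty open ball $B(y,r)$ is entirely contained in $X_2 \setminus X_1$.

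Finally, by density of $\mathbb{Q}^N$ in $\mathbb{R}^N$, the open ball $B(y,r)$ contains a rational point $q \in \mathbb{Q}^N$, which then lies in $(X_2 \setminus X_1) \cap \mathbb{Q}^N$. This contradicts the hypothesis $(X_2 \setminus X_1) \cap \mathbb{Q}^N = \emptyset$, so $X_1 = X_2$. The only nonroutine step is the second one, where one must notice that regular closedness (rather than merely closedness) of $X_2$ is exactly what allows the open ball around $x$ to be shrunk and relocated into the interior of $X_2$; without this hypothesis the statement would fail, as $X_2 \setminus X_1$ might consist only of irrational boundary points of $X_2$.
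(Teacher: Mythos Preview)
Your proof is correct and follows essentially the same approach as the paper: both arguments locate a nonempty open ball inside $X_2 \setminus X_1$ and then invoke the density of $\mathbb{Q}^N$. The paper organizes the steps slightly differently---it first reduces to showing $\operatorname{int}(X_2) \subset X_1$ and then picks the contradicting point directly in $\operatorname{int}(X_2)$, whereas you start from an arbitrary $x \in X_2 \setminus X_1$ and relocate into the interior---but the content is the same.
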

	
	\begin{proof}
		The proof relies on the denseness of rational numbers. We only need to show $\text{int}(X_2) \subset X_1$; then, by the regularity of $X_2$ and closedness of $X_1$, $X_2 = \overline{\text{int}(X_2)} \subset X_1$.
		
		Suppose there exists $x \in \text{int}(X_2)$ such that $x \notin X_1$. Since $X_1$ is closed, there exists a small open ball $B \subset X_2 \setminus X_1$. This open ball must contain a rational vector, which is a contradiction.
	\end{proof}

	\begin{lemma}
		\label{integer cap}
		Let $X_1 \subset X_2$ be closed convex cones in $\mathbb{R}^N$, with $\text{int}(X_2) \neq \emptyset$. If
		\[
		(X_2 \setminus X_1) \cap \mathbb{Z}^N = \emptyset,
		\]
		then $X_1 = X_2$.
	\end{lemma}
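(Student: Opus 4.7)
The plan is to reduce the integer hypothesis to the rational one and then invoke Lemma~\ref{rational cap}. The additional structure here, absent from that lemma, is that both $X_1$ and $X_2$ are convex cones; this lets one clear denominators, passing freely between rational and integer points while preserving membership in each cone.

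Concretely, I would first show that $(X_2 \setminus X_1) \cap \mathbb{Q}^N = \emptyset$. Suppose for contradiction there is a rational $q \in X_2 \setminus X_1$; clearing denominators, write $q = (1/m) x$ with $m \in \mathbb{Z}_{>0}$ and $x \in \mathbb{Z}^N$. Since $X_2$ is a cone, $x = m q \in X_2$. If $x$ were in $X_1$, then by the cone property of $X_1$ the vector $q = (1/m) x$ would also lie in $X_1$, contradicting $q \notin X_1$. Hence $x \in (X_2 \setminus X_1) \cap \mathbb{Z}^N$, violating the hypothesis of the lemma.

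Next, I would verify that Lemma~\ref{rational cap} applies, which requires $X_2$ to be regularly closed. This is the standard convexity fact that a closed convex set with nonempty interior equals the closure of its interior: fixing any $y \in \mathrm{int}(X_2)$ with $B(y,r) \subset X_2$, for $x \in X_2$ and $t \in [0,1)$ the ball $B\bigl((1-t)y + tx,\,(1-t)r\bigr) = \{(1-t)w + tx : w \in B(y,r)\}$ lies in $X_2$ by convexity, so the segment $\{(1-t)y + tx : 0 \leq t < 1\}$ lies in $\mathrm{int}(X_2)$ and has $x$ as its limit as $t \to 1$. Thus $X_2 = \overline{\mathrm{int}(X_2)}$, and Lemma~\ref{rational cap} applied to $X_1 \subset X_2$ yields $X_1 = X_2$.

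I do not anticipate any serious obstacle: the argument is essentially a bookkeeping exercise, the only conceptual step being the observation that the cone structure is precisely what bridges integer and rational points. The main thing to be careful about is to explicitly invoke both the cone property of $X_1$ (to descend from $x$ to $q$) and of $X_2$ (to ascend from $q$ to $x$), since only together do they yield the needed equivalence of rational- and integer-point hypotheses.
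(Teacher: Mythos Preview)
Your proposal is correct and follows essentially the same approach as the paper: reduce to Lemma~\ref{rational cap} via the homogeneity of cones, then verify that $X_2$ is regularly closed. The only cosmetic difference is that the paper invokes \cite[Theorem~6.3]{R1} (closed convex sets equal the closure of their relative interior, and here the relative interior is the interior since $\operatorname{int}(X_2)\neq\emptyset$), whereas you give the direct segment-from-an-interior-point argument; both establish the same fact.
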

	
	\begin{proof}
		By the homogeneity of cones and Lemma \ref{rational cap}, we only need to show that $X_2$ is regularly closed.  By \cite[Theorem 6.3]{R1},  any closed convex set is the closure of its relative interior. Since $\text{int}(X_2) \neq \emptyset$, it cannot be contained by any hyperplane, so the affine hull of $X_2$ is $\mathbb{R}^N$, thus its relative interior coincides with its interior.
	\end{proof}
	
	Next, we introduce the construction of the  Newton-Okounkov body, which is refered to \cite{K1}.
	
	%The Newton-Okounkov body $\Delta(S,M)$ is generated by a semigroup $S$ and a perfectly matched half-space $M$. In this section, we first introduce the relevant concepts. For more properties of semigroup regularizations and Newton-Okounkov bodies, refer to \cite{K1}.
	
	Given $N > 0$, let $L$ be a linear subspace of $\mathbb{R}^N$. A half-space $M \subset L$ is a closed set whose boundary $\partial M$ is an oriented hyperplane in $L$; the inner normal side of $\partial M$ consists of all interior points of $M$. A half-space $M \subset \mathbb{R}^N$ with $0\in \partial M$  can be expressed as the solution set of an inequality:
	\[
	\langle x, \nu \rangle \geq 0, \quad x\in L,\nu \in \mathbb{R}^N,
	\]
	where $\langle \cdot, \cdot \rangle$ denotes the Euclidean inner product, and $\nu$ is the inner normal vector of $\partial M$. A half-space $M \subset L$ is said to be \textbf{rational} if both $L$ and $\partial M$ can be spanned by \textbf{rational vectors}, i.e., vectors with rational coordinates. When $\nu$ is a rational vector, the half-space defined by the inequality is rational, and rational half-spaces are invariant under rational transformations.
	
	\begin{remark}
		Regarding the definition of a half-space, standard convex analysis textbooks \cite{R1} allow $M$ to be translated by a distance in $L$ away from the origin; we will use this definition in \S \ref{sect: diagonal}. 
	\end{remark}
	
	%In subsequent contexts, $N$ often denotes a large integer related to $n$ and $d$.
	
	Let $S$ be a semigroup of the lattice group $\mathbb{Z}^N \subset \mathbb{R}^N$. It can be associated with the following objects:
	\begin{itemize}
		\item The subspace $L(S) \subset \mathbb{R}^N$ generated by $S$. By definition, $L(S)$ is spanned by integer vectors, so the rank of the subgroup $L(S)\cap \mathbb{Z}^N$ equals $\dim L(S)$.
		\item The closed convex cone ${\rm{Con}}(S) \subset L(S)$, generated by $S$: the closure of all linear combinations of the form $\sum_i \lambda_i a_i$ where $a_i \in S$ and $\lambda_i \geq 0$.
		\item The group  $G(S) \subset L(S)$ generated by all elements of $S$: $G(S)$ consists of all linear combinations of the form $\sum_i k_i a_i$ where $a_i \in S$ and $k_i \in \mathbb{Z}$.
	\end{itemize}

	For  a rational  half-space $M \subset L$, we can associate the subgroups $\partial M_{\mathbb{Z}} = \partial M \cap \mathbb{Z}^N$ and $L_{\mathbb{Z}} = L \cap \mathbb{Z}^N$. Since all subgroups of $\mathbb{Z}^N$ are finitely generated and torsion-free, the quotient group $L_{\mathbb{Z}} / \partial M_{\mathbb{Z}}$ is a free abelian group of rank one. Hence, we can choose an element $e \in L_{\mathbb{Z}}$ whose image generates the quotient, and extend it together with a basis of $\partial M_{\mathbb{Z}}$ to form a basis of $L_{\mathbb{Z}}$. 
	
	Thus, there exists a unique linear map $\pi_M : L \to \mathbb{R}$ such that $\ker(\pi_M) = \partial M$, $\pi_M(L_{\mathbb{Z}}) = \mathbb{Z}$, and $\pi_M(M \cap \mathbb{Z}^N) = \mathbb{Z}_{\geq 0}$ (the non-negative integers). In particular, $\pi_M$ induces an isomorphism from $L_{\mathbb{Z}} / \partial M_{\mathbb{Z}}$ onto $\mathbb{Z}$.

	%The ridge of a closed convex cone is the maximal subspace contained within the cone. A closed convex cone is said to be strictly convex if its ridge is exactly the origin.
	\begin{definition}     
		\label{Newton-Okounkov body defi}
		Let \(S\) be a semigroup in \(\mathbb{Z}^{N}\), and \(M\) be a rational half-space in \(L(S)\) such that \(S \subset M\). The pair \((S, M)\) is called a \textbf{strongly admissible pair} if the closed convex cone \(\mathrm{Con}(S)\) is strictly convex and satisfies \(\mathrm{Con}(S) \cap \partial M = \{0\}\). Its \textbf{Newton-Okounkov body} is defined as the compact convex set
		\[
		\Delta(S, M) = \mathrm{Con}(S) \cap \pi_{M}^{-1}(\mathrm{ind}(S, M)),
		\]
		where \(\mathrm{ind}(S, M)\) is the \textbf{index} of the subgroup \(\pi_M(G(S))\) in \(\mathbb{Z}\).
	\end{definition}

	\section{Newton-Okounkov body associatied with SOS conjecture}\label{sec: general case}
	We now apply the framework of Newton-Okounkov body to (Weak) SOS conjecture. 	For all $n,d \geq 2$, let
	\[
	M = {\rm{Tr}}^{-1}(\mathbb{R}_{\geq 0}),
	\]
	\[
	S_1 = \left\{ H \in {\rm{Herm}}_{\binom{n+d-1}{d}}(\mathbb{Z}[i]) \mid H \geq 0 \right\}, \quad S_2 = \left\{ H \in {\rm{Herm}}_{\binom{n+d-1}{d}}(\mathbb{Z}[i]) \mid J_n(H) \geq 0 \right\}.
	\]
	The notation ${\rm{Herm}}_{m}(\mathbb{Z}[i])$  , where $m=\binom{n+d-1}{d},$ denotes the set of all $m \times m$  Hermitian matrices with entries in the ring of Gaussian integers $\mathbb{Z}[i]$, which is isomorphic to $\mathbb Z^{m^2}$, and $\rm Tr$ denotes the trace operator on the space of Hermitian matrices $\rm{Herm}_m(\mathbb C)$.
	
	It is clear that ${\rm{Con}}(S_2) = \left\{ H \in {\rm{Herm}}_{\binom{n+d-1}{d}}(\mathbb{C}) \mid J_n(H) \geq 0 \right\}$. Actually, the previous set is contained in the latter one and the latter has non-empty interior, By Lemma \ref{integer cap}, the reverse inclusion also holds. 
	
	Similary, one can see that ${\rm{Con}}(S_1) = \left\{ H \in {\rm{Herm}}_{\binom{n+d-1}{d}}(\mathbb{C}) \mid H \geq 0 \right\}$.

	The following lemma, which describes how the prolongation map scales the trace, is fundamental. It not only facilitates the analysis of the Newton-Okounkov bodies but also imposes a necessary positivity condition on the trace.
	
	\begin{lemma}
		\label{trace n lemma}
		The prolongation map
		\[
		J_n : \mathbb{C}[z, \bar{z}] \to \mathbb{C}[z, \bar{z}], \quad A:=A(z, \bar{z}) \mapsto A(z, \bar{z})\|z\|^2=:A\|z\|^2
		\]
		scales the trace of the polynomial by a factor of $n$:
		\begin{equation}
			{\rm{Tr}}(J_n(A)) = n \cdot {\rm{Tr}}(A).
		\end{equation}
	\end{lemma}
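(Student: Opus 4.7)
The plan is to make the identity transparent by writing both sides as explicit sums of coefficients in the monomial basis. Write $A(z,\bar z)=\sum_{\alpha,\beta} c_{\alpha\beta}\, z^{\alpha}\bar z^{\beta}$. By the definition $A = \mathfrak{Z} H \mathfrak{Z}^*$ from the introduction, the matrix entry $H_{\alpha,\beta}$ is exactly $c_{\alpha\beta}$, and therefore
\[
\operatorname{Tr}(A) \;=\; \sum_{\alpha} c_{\alpha\alpha},
\]
i.e.\ the trace equals the sum of coefficients of the ``diagonal'' monomials $z^{\alpha}\bar z^{\alpha}$. My first step would be to record this interpretation, since it is what turns the lemma into a purely combinatorial bookkeeping claim about coefficients.

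Next I would decompose $\|z\|^{2}=\sum_{k=1}^{n} z_{k}\bar z_{k}$, giving $J_{n}(A)=\sum_{k=1}^{n} z_{k}\bar z_{k}\, A$, and analyze the diagonal coefficients of each summand $z_{k}\bar z_{k}\,A$ separately. The only contribution to the $z^{\gamma}\bar z^{\gamma}$ coefficient comes from the term $c_{\gamma-e_{k},\,\gamma-e_{k}}\,z^{\gamma-e_{k}}\bar z^{\gamma-e_{k}}$ of $A$ (with the convention that this contribution vanishes when $\gamma_{k}=0$), because multiplying by $z_{k}\bar z_{k}$ forces the exponents on $z$ and on $\bar z$ to decrease by $e_{k}$ simultaneously.

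Summing these diagonal coefficients over $\gamma$ and reindexing via $\alpha:=\gamma-e_{k}$ turns the contribution of each fixed $k$ into $\sum_{\alpha} c_{\alpha\alpha}=\operatorname{Tr}(A)$, so summing over $k=1,\ldots,n$ yields $\operatorname{Tr}(J_{n}(A))=n\,\operatorname{Tr}(A)$. There is no genuine obstacle here: the lemma is essentially a coefficient-matching computation, and the only point needing a small amount of care is the identification of the matrix trace of $H$ with the sum of the diagonal monomial coefficients $c_{\alpha\alpha}$ of $A$, which is immediate from $A=\mathfrak{Z} H\mathfrak{Z}^{*}$ together with the convention that the trace is independent of the choice of $d$ (any sufficiently large $d$ sees all multi-indices $\alpha$ with a nonzero $c_{\alpha\alpha}$).
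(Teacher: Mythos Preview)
Your proof is correct and follows essentially the same approach as the paper: both identify $\operatorname{Tr}(A)$ with the sum of the coefficients $c_{\alpha\alpha}$ of the diagonal monomials $|z^{\alpha}|^{2}$, and then do the combinatorial bookkeeping under multiplication by $\|z\|^{2}$. The paper phrases it as ``each $|z^{\alpha}|^{2}$ produces $n$ distinct diagonal monomials, and diagonal monomials of $A\|z\|^{2}$ arise only this way,'' whereas you group by the factor $z_{k}\bar z_{k}$ and reindex; these are the same count read in two directions.
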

	
	\begin{proof}
		The proof is straightforward. Choose a sufficiently large degree $d$. The trace of the matrix $H_A$ corresponding to $A$ under the lexicographical order equals the sum of its diagonal entries, i.e., the sum of the coefficients of all terms of the form $|z^\alpha|^2$ with $|\alpha| \leq d$. Multiplying $|z^\alpha|^2$ by
		\[
		\|z\|^2 = |z_1|^2 + \cdots + |z_n|^2
		\]
		yields $n$ monomials with the same coefficient but different indices. Moreover, the terms $|z^\beta|^2$ (with $|\beta| \leq d+1$) of $A\|z\|^2$ can only be generated in this way. Thus, the trace of the latter is $n$ times that of the former. 
	\end{proof}
	
	\begin{remark}
		Similarly, for all $r, s \geq 0$ with $r + s \leq n$, one can prove that
		\[
		{\rm{Tr}}\left(A\left(|z_1|^2 + \cdots + |z_r|^2 - |z_{r+1}|^2 - \cdots - |z_{r+s}|^2\right)\right) = (r - s){\rm{Tr}}(A).
		\]
	\end{remark}
	\begin{proposition}
		$(S_1, M)$ and $(S_2, M)$ are strongly admissible pairs.
	\end{proposition}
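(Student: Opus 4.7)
The plan is to unpack the definition of a strongly admissible pair and verify each of the four conditions for both $(S_1,M)$ and $(S_2,M)$ in parallel, exploiting the fact that $J_n$ is a linear injection together with Lemma \ref{trace n lemma}.

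First I would identify the ambient lattice and the relevant cones. Since every Hermitian matrix can be written as a difference of two positive semidefinite Gaussian-integer Hermitian matrices (use integer scaling of a rational spectral decomposition), one checks that $L(S_1) = \operatorname{Herm}_m(\mathbb{C})$ viewed as a real vector space; because $S_1 \subset S_2$ (if $H\geq 0$, then $A\|z\|^2$ is SOS whenever $A$ is SOS), one also has $L(S_2) = \operatorname{Herm}_m(\mathbb{C})$. The trace is a linear functional with rational coefficients (in fact with coefficients $0,1$ on the standard basis), so its zero-set $\partial M$ is a rational hyperplane, and $M = \operatorname{Tr}^{-1}(\mathbb{R}_{\geq 0})$ is a rational half-space in each $L(S_i)$.

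Next I would verify $S_i \subset M$. For $S_1$ this is the elementary fact $H\geq 0 \Rightarrow \operatorname{Tr}(H)\geq 0$. For $S_2$, apply Lemma \ref{trace n lemma}: if $J_n(H)\geq 0$, then $n\operatorname{Tr}(H)=\operatorname{Tr}(J_n(H))\geq 0$, so $\operatorname{Tr}(H)\geq 0$.

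The two remaining conditions — strict convexity of $\operatorname{Con}(S_i)$ and $\operatorname{Con}(S_i)\cap \partial M=\{0\}$ — both reduce to the observation that a positive semidefinite matrix with vanishing trace is the zero matrix. Concretely, $\operatorname{Con}(S_1)$ is the PSD cone (as already noted in the excerpt via Lemma \ref{integer cap}), so $H,-H\geq 0$ forces $H=0$ and $H\geq 0$ with $\operatorname{Tr}(H)=0$ forces $H=0$. For $\operatorname{Con}(S_2)=\{H\in \operatorname{Herm}_m(\mathbb{C})\mid J_n(H)\geq 0\}$, if $H$ and $-H$ both lie in this cone, then $J_n(H)$ and $-J_n(H)=J_n(-H)$ are both PSD, hence $J_n(H)=0$, and injectivity of $J_n$ gives $H=0$; if instead $H\in \operatorname{Con}(S_2)\cap \partial M$, then $J_n(H)$ is PSD and by Lemma \ref{trace n lemma} has trace $n\operatorname{Tr}(H)=0$, so $J_n(H)=0$ and again $H=0$ by injectivity.

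The main conceptual point — and the only step that is not entirely routine bookkeeping — is recognizing that the injectivity of $J_n$ combined with the trace-scaling Lemma \ref{trace n lemma} is exactly what upgrades the pair $(S_2,M)$ from admissible (in the sense of containment $S_2\subset M$) to strongly admissible. No serious obstacle is anticipated; the entire proof should fit in a few lines once the four bullet-point conditions are listed.
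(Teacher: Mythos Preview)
Your proposal is correct and follows essentially the same route as the paper: the decisive step in both is that $J_n(H)\geq 0$ together with $\operatorname{Tr}(J_n(H))=n\operatorname{Tr}(H)=0$ forces $J_n(H)=0$, whence $H=0$ by injectivity of $J_n$. The only difference is organizational: you verify strict convexity and $\operatorname{Con}(S_i)\cap\partial M=\{0\}$ as two separate bullet points, whereas the paper observes (implicitly) that once $S_2\subset M$ is known, the single computation $\operatorname{Con}(S_2)\cap\operatorname{Tr}^{-1}(0)=\{0\}$ already forces the ridge to be trivial, so both conditions fall out at once.
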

	\begin{proof}
		Since the proof is similar, we only prove it for $(S_2, M)$.
		It is easy to show that ${\rm{Con}}(S_2) = \left\{ H \in {\rm{Herm}}_{\binom{n+d-1}{d}}(\mathbb{C}) \mid J_n(H) \geq 0 \right\}$ is strictly convex. In fact, since the trace is a continuous function, $\partial {\rm{Tr}}^{-1}(\mathbb{R}_{\geq 0}) = {\rm{Tr}}^{-1}(0)$. Let $H_0 \in {\rm{Con}}(S_2) \cap {\rm{Tr}}^{-1}(0)$. By Lemma \ref{trace n lemma}, $S_2 \subset M,J_n(H_0) \geq 0$ and ${\rm{Tr}}(J_n(H_0)) = 0$, so $J_n(H_0) = 0$, since the prolongation map is injective, thus $H_0 = 0$, meaning the intersection contains only the origin, that is to say, $(S_2, M)$ is a strongly admissible pair.
	\end{proof}
	
	\noindent\textbf{Proof of Theorem \ref{thm:main-general}.}
	The proof of theorem \ref{thm:main-general} proceeds in three steps: 
	\begin{itemize}
		\item Step 1: Express the minimal rank \(R_{n,d}\) in terms of the cones;
		\item Step 2: Use a homogeneity argument to restrict the search to the Newton-Okounkov bodies;
		\item Step 3: Apply a convexity argument to further restrict the search to the extreme points.
	\end{itemize}
	\noindent \textbf{Step 1.} We first \textbf{claim} that  $\Delta(S_1, M) \subset \Delta(S_2, M)$.
	
	It is easy to verify that the morphism associated with the half-space $M$ is
	\[
	\pi_M : {\rm{Herm}}_{\binom{n+d-1}{d}}(\mathbb{C}) \to \mathbb{R}, \quad H \mapsto {\rm{Tr}}(H).
	\]
	Furthermore, since $|z_1|^{2d} \in S_1 \subset S_2$, we have $1 \in \pi_M(G(S_1)) \subset \pi_M(G(S_2))$. That is,
	\[
	{\rm{ind}}(S_1, M) = {\rm{ind}}(S_2, M) = 1.
	\]
	By Definition \ref{Newton-Okounkov body defi}, we can explicitly write these two Newton-Okounkov bodies as
	\[
	\Delta(S_1, M) = \left\{ H \in {\rm{Herm}}_{\binom{n+d-1}{d}}(\mathbb{C}) \mid H \geq 0 \right\} \cap {\rm{Tr}}^{-1}(1),
	\]
	\[
	\Delta(S_2, M) = \left\{ H \in {\rm{Herm}}_{\binom{n+d-1}{d}}(\mathbb{C}) \mid J_n(H) \geq 0 \right\} \cap {\rm{Tr}}^{-1}(1).
	\]
	It is obvious that $S_1 \subset S_2$, it follows from above that $\Delta(S_1, M) \subset \Delta(S_2, M)$. \textbf{The claim follows}.
	%The inclusion $S_1 \subset S_2$ is intuitively clear from the polynomial perspective: $H \geq 0$ is equivalent to the corresponding polynomial $A$ being expressible as a sum of squares of holomorphic polynomials, so $A\|z\|^2$ can also be written as a sum of squares of holomorphic polynomials (i.e., $J_n(H) \geq 0$). Thus, $\Delta(S_1, M) \subset \Delta(S_2, M)$.
	
	We now compute $R_{n,d}$:
	\begin{align*}
		R_{n,d} :=& \inf\left\{ {\rm{Rank}}\Big(A(z,\bar{z})\|z\|^2 \Big) \,\bigg|\, A \in \mathbb{C}_{(d,d)}[z,\bar{z}],\ A \notin {\rm{SOS}}_n,\ J_n(A) \in {\rm{SOS}}_n \right\}\\
		=& \inf\left\{ {\rm{Rank}}\Big(J_n(H) \Big) \,\bigg|\, H \in {\rm{Herm}}_{\binom{n+d-1}{d}}(\mathbb{C}),\ H \ngeq 0,\ J_n(H) \geq 0 \right\}\\
		=& \inf\left\{ {\rm{Rank}}\Big(J_n(H) \Big) \,\bigg|\, H \in {\rm{Cone}}(S_2),\ H \ngeq 0 \right\}.
	\end{align*}
	
	\noindent\textbf{Step 2.}
	For any $0 \neq H \in {\rm{Cone}}(S_2)$, there exists $H' \in {\rm{Cone}}(S_2)$ with ${\rm{Tr}}(H') = 1$ such that
	\begin{equation}
		\label{trace 1}
		{\rm{Rank}}(J_n(H)) = {\rm{Rank}}(J_n(H')).
	\end{equation}
	Actually, when ${\rm{Tr}}(H) > 0$, take $H' = \frac{H}{{\rm{Tr}}(H)}$; when ${\rm{Tr}}(H) = 0$, then $H = 0$. 
	
	Using the homogeneity  (\ref{trace 1}).
	\begin{align*}
		R_{n,d} 
		=& \inf\left\{ {\rm{Rank}}\Big(J_n(H) \Big) \,\bigg|\, H \in {\rm{Cone}}(S_2),\ H \ngeq 0,\ {\rm{Tr}}(H) = 1 \right\}\\
		=& \inf\left\{ {\rm{Rank}}\Big(J_n(H) \Big) \,\bigg|\, H \in \Delta(S_2, M) \setminus \Delta(S_1, M) \right\}.
	\end{align*}
	
	\noindent\textbf{Step 3.}
	Since for any two positive semidefinite Hermitian matrices $H_1, H_2$,  ${\rm{Rank}}(H_1 + H_2) \geq \max\left\{ {\rm{Rank}}(H_1), {\rm{Rank}}(H_2) \right\}$, we have that  for any  $H_1, H_2 \in \Delta(S_2, M) \setminus \Delta(S_1, M)$ with $H_1 \neq H_2$ and $0 < \lambda < 1$, it holds that 
	\begin{align*}
		{\rm{Rank}}\Big(J_n(\lambda H_1 + (1-\lambda)H_2)\Big) &= {\rm{Rank}}\Big(\lambda J_n(H_1) + (1-\lambda)J_n(H_2)\Big) \\
		&\geq \max\left\{ {\rm{Rank}}(J_n(H_1)), {\rm{Rank}}(J_n(H_2)) \right\}.   
	\end{align*}
	Thus, if the set $\Delta(S_2, M) \setminus \Delta(S_1, M)$ has non-empty extreme points, we have
	\[
	R_{n,d} = \inf_{X \in E(\Delta_2\setminus \Delta_1)} {\rm{Rank}}(J_n(X)) = \inf_{X \in E(\Delta_2) \setminus E(\Delta_1)} {\rm{Rank}}(J_n(X)).
	\]
	The lastest equality uses the lemma \ref{extreme point outside}.

	By  \cite[Theorem 18.5]{R1}, any closed convex set that does not contain a line is the closure of its extreme points and extreme directions. Since Newton-Okounkov bodies are compact sets, they do not contain any lines or extreme directions, so $E(\Delta(S_i, M)) \neq \emptyset$, for $i=1,2$. Furthermore, $E(\Delta(S_2, M)) \setminus E(\Delta(S_1, M)) \neq \emptyset$, thus the set $\Delta(S_2, M) \setminus \Delta(S_1, M)$ has non-empty extreme points; otherwise, $\Delta(S_2, M) \subset \Delta(S_1, M)$, which is absurd. 
	
	The proof of Theorem \ref{thm:main-general} is complete.
	%to anyone familiar with the factorization of the sum of two cubes.
	\section{Newton-Okounkov body associatied with SOS conjecture in the diagonal case}\label{sect: diagonal}
	In this section, we restrict our discussions to the SOS conjecture in the diagonal case, and complete the proof of Theorem \ref{thm:main-diagonal}. It suffices to consider the real valued,  bihomogeneous of degree $(d,d)$, Hermitian polynomial $A(z,\bar z)$.

	%We now show that the Newton-Okounkov body in this diagonal case has only finitely many rational extreme points.
	
	For any $n \geq 1$ and $d \geq 0$, set  $N=\binom{n+d-1}{d}$. With respect to the lexicographic order of the holomorphic polynomial of degrees $d$, there is a canonical isomorphism $\mathbb{C}_{(d,d)}^{Diag}[z,\bar{z}] \cong \mathbb{R}^{N}$ regarding the associated Hermitian matrix $H$ as a column vector. 
	
	Let $J_{n,d}$ be the associated prolongation map defined as follows:
	\[
	J_n : \mathbb{C}_{(d,d)}^{Diag}[z,\bar{z}] \to \mathbb{C}_{(d+1,d+1)}^{Diag}[z,\bar{z}],\quad H \mapsto J_{n,d}H.
	\]
	In \cite{WYZ}, we give a Macaulay type matrix representation of $J_{n,d}$ with respect to the lexicographic order of the holomorphic polynomial of degrees $d$ and $d+1$ as follows:
	for any $n \geq 1$ and $d \geq 0$, define $J_{1,d} = 1$ and
	\begin{align}\label{equ: mac repr Jnd}
		J_{n,d} =
		\begin{pmatrix}
			1 &&&&&\\
			J_{n-1,0}& I &&&&\\
			& J_{n-1,1} & I &&&\\
			&&\cdots &&I&\\
			&&&\cdots & J_{n-1,d-1} &I \\
			&&&&&J_{n-1,d}
		\end{pmatrix}.
	\end{align}
	Moreover, $J_{n,d}$ is a $\binom{n+d}{d+1} \times \binom{n+d-1}{d}$ matrix with exactly $n$ entries equal to $1$ in each column and zeros elsewhere. 
	%In \cite{WYZ}, we used the prolongation matrix $J_{n,d}$ to study the lower bound of $R_{n,d}^{Diag}$ when $H \in {\rm{Herm}}_{\binom{n+d-1}{d}}(\mathbb{C})$ is a diagonal matrix. 
	
	%roposition 2.1 in \cite{WYZ} shows that the matrix corresponding to the prolongation map is the prolongation matrix defined above:
	
	For all $n,d \geq 2$, let
	\[
	M^{Diag} = {\rm{Tr}}^{-1}(\mathbb{R}_{\geq 0}) \cap \mathbb{C}_{(d,d)}^{Diag}[z,\bar{z}] ,
	\]
	\[
	S_1^{Diag} = \left\{ H \in \mathbb{R}^{\binom{n+d-1}{d}} \mid H \geq 0 \right\}, \quad S_2^{Diag} = \left\{ H \in \mathbb{R}^{\binom{n+d-1}{d}} \mid J_{n,d}H \geq 0 \right\}.
	\]
	
	By the same arguments as in \S \ref{sec: general case}, we obtain
	\begin{align*}
		R_{n,d}^{Diag} :=& \inf\left\{ {\rm{Rank}}\Big(A(z,\bar{z})\|z\|^2 \Big) \,\bigg|\, A \in \mathbb{C}_{(d,d)}^{Diag}[z,\bar{z}],\ A \notin {\rm{SOS}}_n,\ J_{n,d}(A) \in {\rm{SOS}}_n \right\}\\
		=& \inf\left\{ {\rm{Rank}}\Big(J_n(H) \Big) \,\bigg|\, H \in \Delta(S_2^{Diag}, M^{Diag}) \setminus \Delta(S_1^{Diag}, M^{Diag}) \right\}\\
		=& \inf_{X \in E(\Delta_2^{Diag}) \setminus E(\Delta_1^{Diag})} {\rm{Rank}}(J_n(X)).
	\end{align*}
	
	To complete the proof of Theorem \ref{thm:main-diagonal}, it is only left to show that $E(\Delta_2^{Diag})$ contains only finitely many rational points. 
	
	If we generalize the definition of a half-space to allow it to not contain the origin, then a polyhedral convex set $C \subset \mathbb{R}^N$ formed by the intersection of finitely many rational half-spaces can be expressed as the solution set of linear constraints:
	\[
	\langle x, a_i \rangle \geq k_i,\ 1 \leq i \leq m,\quad a_i \in \mathbb{Q}^N,\ k_i \in \mathbb{Q}.
	\]
	
	For any point $p \in C$, let $\mathrm{I}(p)$ denote the set of indices $i$ for which the inequality is tight at $p$ (i.e., $\langle p, a_i \rangle = k_i$). By the definition of extreme points and basic linear algebra, we have the following lemma.
	\begin{lemma}
		A point $p$ is an extreme point of $C$ if and only if ${\rm{Rank}}(A_{{\rm{I}}(p)}) = N$, where $A_{{\rm{I}}(p)}$ is the matrix whose column vectors are  $a_i$, with $i\in {\rm{I}}(p)$.
	\end{lemma}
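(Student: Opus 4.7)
The lemma is the standard vertex--active-constraint correspondence for polyhedra, and I would prove the two implications by exploiting how active constraints transform under convex combinations (for $\Leftarrow$) and under small perturbations (for $\Rightarrow$).

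For the direction $(\Leftarrow)$, assume $\operatorname{Rank}(A_{\mathrm{I}(p)}) = N$ and suppose, for contradiction, that some open interval $(q,r) \subset C$ contains $p$, so that $p = \lambda q + (1-\lambda) r$ for some $\lambda \in (0,1)$ with distinct $q, r$; closedness of $C$ places the endpoints $q, r$ in $C$ itself. Taking the inner product with any $a_i$, $i \in \mathrm{I}(p)$, yields
\[
k_i = \langle p, a_i \rangle = \lambda \langle q, a_i \rangle + (1-\lambda) \langle r, a_i \rangle \geq \lambda k_i + (1-\lambda) k_i = k_i,
\]
so both inequalities must be tight, giving $\langle q-r, a_i \rangle = 0$ for every $i \in \mathrm{I}(p)$. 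Equivalently, $q - r$ lies in $\ker(A_{\mathrm{I}(p)}^{T})$; but the hypothesis of full column rank forces this null space to be trivial, contradicting $q \neq r$.

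For the direction $(\Rightarrow)$, I argue the contrapositive. If $\operatorname{Rank}(A_{\mathrm{I}(p)}) < N$, then $\ker(A_{\mathrm{I}(p)}^{T})$ contains a non-zero vector $v$, so $\langle v, a_i \rangle = 0$ for every $i \in \mathrm{I}(p)$. For each of the finitely many $j \notin \mathrm{I}(p)$ the strict inequality $\langle p, a_j \rangle > k_j$ holds, hence for $\varepsilon > 0$ chosen small enough one has $\langle p \pm \varepsilon v, a_j \rangle > k_j$ simultaneously for all such $j$, while the active constraints are preserved exactly. Thus $p \pm \varepsilon v \in C$ with $p$ as their midpoint, so the open interval $(p - \varepsilon v,\, p + \varepsilon v) \subset C$ contains $p$, and $p$ is not extreme. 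No step constitutes a genuine obstacle; the only subtlety is translating between the open-interval definition of extreme point from \S2 and the convex-combination formulation used above, which is immediate from closedness of $C$ together with the fact that there are only finitely many defining constraints.
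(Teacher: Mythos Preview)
Your proof is correct and complete. The paper itself does not give a proof of this lemma at all: it simply states the result, prefacing it with the remark ``By the definition of extreme points and basic linear algebra, we have the following lemma.'' Your argument is exactly the standard one that this phrase is gesturing at, so there is nothing to compare---you have filled in details that the authors chose to omit. One cosmetic point: when you write ``full column rank,'' this refers to $A_{\mathrm{I}(p)}^{T}$ (an $|\mathrm{I}(p)| \times N$ matrix of rank $N$), not to $A_{\mathrm{I}(p)}$ itself; the conclusion $\ker(A_{\mathrm{I}(p)}^{T}) = \{0\}$ is of course what you intend, and it follows immediately from rank--nullity.
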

	
	Since the components of $A_{{\rm{I}}(p)}$ and $k_i$ are all rational, Cramer's rule implies $p \in \mathbb{Q}^N$. The Newton-Okounkov body $\Delta(S_2^{Diag}, M^{Diag})$ is exactly the solution set of the following rational-coefficient inequalities, consisting of $2 + \binom{n+d}{d+1}$ inequalities:
	\begin{equation}\label{eq:7}
		\begin{cases}
			{\rm{Tr}}(H) \geq 1, \\
			-{\rm{Tr}}(H) \geq -1, \\
			J_{n,d}H \geq 0.
		\end{cases}
	\end{equation}
	Thus, the number of extreme points of $E(\Delta_2^{Diag})$ is at most $C(2 + \binom{n+d}{d+1},\binom{n+d-1}{d})$.  Thus the proof of \cref{thm:main-diagonal} is complete.
	
By reducing the verification of the SOS Conjecture to a computationally tractable problem in the diagonal case, \Cref{thm:main-diagonal} provides a new tool for attacking it. The proposed algorithm in \cref{eq:7} can in principle compute all extremal points and thus the minimal rank $R_{n,d}^{\text{Diag}}$. For the present study, we only present a representative subset of results with acceptable computational cost in the following table  and omit exhaustive calculations.
	
	\begin{table}[htbp]
		\centering
		\begin{tabular}{|c|c|c|c|c|c|c|}
			\hline
			\diagbox[width=3cm, height=1.5cm]{$n$}{$R_{n,d}^{Diag}$}{$d$} & 2 & 3 & 4 & 5 & 6 & 7 \\
			\hline
			2 & 2 & 2 & 2 & 2 & 2 &2\\
			\hline
			3 & 5 & 5 & 5 & 5& 5 & 5\\
			\hline
			4 & 8 & 8 & 8 & 8& 8& 8\\
			\hline
			5 & 14 & 14 &  & & & \\
			\hline
			6 & 20 &  &  & & & \\
			\hline
			7 & 27 &  &  &  & &\\
			\hline
			8 & 35 &  &  &  & &\\
			\hline
			9 & 44 &  &  &  & &\\
			\hline
			10 & 54 &  &  &  & &\\
			\hline
			11 & 65 &  &  &  & &\\
			\hline
		\end{tabular}
		%\caption{Blank entries are unknown and difficult to compute.}
	\end{table}
	From the above table, it seems to us that 
	\[ R_{n,d}^{Diag}= \binom{n+1}{2}-1 ,\quad n \geq 5.\]
	and $R_{n,d}^{Diag}=R_{n,d+1}^{Diag}$, which implies the  (Weak) SOS conjecture for diagonal case in any dimension using \cite[Theorem 1.1 (B)]{WYZ} and induction argument.

\end{document}